\definecolor{red}{rgb}{1,0,0}
\theoremstyle{plain}
\newtheorem{thm}{Theorem}[section]
\newtheorem{cor}[thm]{Corollary}
\newtheorem{quest}[thm]{Question}
\newtheorem{pro}[thm]{Proposition}
\newtheorem{lem}[thm]{Lemma}
\newtheorem{observ}[thm]{Observation}
\newtheorem*{namedtheorem}{\theoremname}
\newcommand{\theoremname}{testing}
\newtheorem{fact}[thm]{Fact}
\newenvironment{named}[1]{\renewcommand{\theoremname}{#1}\begin{namedtheorem}}{\end{namedtheorem}}
\theoremstyle{definition}
\newtheorem*{defn}{Definition}
\newtheorem{exa}[thm]{Example}
\newtheorem{rem}[thm]{Remark}
\newtheorem{claim}[thm]{Claim}
\def\Z{\mathbb{Z}}
\def\F{\mathbb{F}}
\def\hat{\widehat}
\DeclareMathOperator{\GL}{GL}\DeclareMathOperator{\PSL}{PSL}
\DeclareMathOperator{\SL}{SL}\DeclareMathOperator{\Aut}{Aut}
\DeclareMathOperator{\ssl}{\mathfrak{sl}}
\DeclareMathOperator{\lcm}{lcm}
\newcommand{\FF}{\mathcal{F}}
\newcommand{\intfunc}{i}
\newcommand{\asymptotic}{ \dot{\ \sim \ }}%
\newcommand{\asymptoticless}{ \dot{\  \preceq \ }}
\newcommand{\asymptoticge}{\dot{\ \succeq \ }}
\newcommand{\congruent}{{\  \sim \ }}%
\newcommand{\congruentleq}{ {\  \preceq \ }}
\newcommand{\congruentgeq}{{\ \succeq \ }}
\newcommand{\intersection}{\cap}
\newcommand{\BN}{\mathbb N} 
 \newcommand{\BZ}{\mathbb Z}
\newcommand{\CG}{\mathcal G} 
\newcommand{\CO}{\mathcal O}
\DeclareMathOperator{\automorphism}{Aut}
\DeclareMathOperator{\outerautomorphism}{Out}
\DeclareMathOperator{\Out}{Out}
\newcommand{\comment}[1]{}
\begin{document}
\bibliographystyle{plain}


\title{\textbf{Intersection growth in groups}}
\author{Ian Biringer, Khalid Bou-Rabee, Martin Kassabov, Francesco Matucci}
\maketitle


\begin{abstract}
The intersection growth of a group $G$ is the asymptotic behavior of the index of the intersection of all subgroups of $G $ with index at most $n$, and measures the Hausdorff dimension of $G$ in profinite metrics.
We study intersection growth in free groups
and special linear groups and relate intersection growth to quantifying residual finiteness.
\end{abstract}

\smallskip\smallskip


\section{Introduction}
A group $G $ is called \emph {residually finite} if for every nontrivial element $g \in G $ there is a homomorphism $\phi : G \to F$ onto a finite group with $\phi(g) \neq 1 $.  A subtle related problem is to determine how many elements of $G $ can be detected as nontrivial in \emph{small} finite quotients $F$, i.e.\ those with cardinality at most some $n $.  This problem is known as \emph{quantifying residual finiteness}, and has been studied in ~\cite{Bou}, ~\cite{Buskin}, ~\cite{BM1}, ~\cite{BM2}, ~\cite{KM11}.   In these papers, the idea is to fix a generating set $S$ for $G $ and to determine the size $F_G^S(r)$ of the largest finite quotient needed to detect as nontrivial an element of $G$ that can be written as an $S$-word with length at most $r $.  Fine asymptotic bounds for this \emph{residual finiteness growth function} $F_G^S(r)$ are given for a number of groups, in particular free groups, and a closely related function is shown to characterize virtual nilpotence in ~\cite{BM1}.

In this article, we study instead the \emph {percentage} of elements of $G $ that can be detected as nontrivial in a quotient of size $n $.  Specifically, the (normal) \emph {intersection growth function} $i^\lhd_G(n)$ of $G$ is the index of the intersection of all normal subgroups of $G $ with index at most $n $.  In addition to its relation to the program above, this function has geometric motivation: we show in Section \ref{sec:profinite} that intersection growth is a profinite invariant and that its asymptotics control the Hausdorff dimension of the profinite completion of $G$.  

The majority of this paper concerns bounds for variants of $i^\lhd_G(n)$ in special linear groups and free groups, which we will state precisely in the next section.  However, in Section \ref{sec:applications}, we also explain how intersection growths can be used to extract information about the residual finiteness growth
function and identities in groups. Moreover, in an upcoming
work by the authors a fine analysis of $i^\lhd_G(n)$ will be given
for nilpotent groups, mirroring the work of Grunewald, Lubotzky, Segal,
and Smith  ~\cite{LS03} on \emph{subgroup growth}, which counts the number
of subgroups of index at most $n$ in a group.

\comment{  In this note, we study the index $i (n) $ of the intersection of subgroups with index at most $n$ in a group.
These growths are defined precisely in Section~\ref{sec:statements}
and are all referred to as \emph{intersection growth functions}.
The idea of studying how fast intersection growth functions grow
is evident in works concerning quantifying residual finiteness, which we briefly review here
(see also Section~\ref{sec:applications}).
Quantifying residual finiteness concerns studying the asymptotic behavior of the profinite metric,
began in~\cite{Bou} with the introduction of the \emph{residual finiteness growth function}, where general results
concerning free groups, nilpotent groups, and Grigorchuk groups were addressed
(see Section~\ref{sec:applications} for a precise definition). In~\cite{Buskin}, Buskin strengthened
the results concerning free groups,
and later on Bou-Rabee and McReynolds~\cite{BM1} and
Kassabov and Matucci~\cite{KM11} strengthened these results even further. In the class of arithmetic groups, Kaletha and Bou-Rabee completely determined the residual finiteness growth for a large class of higher rank arithmetic groups in~\cite{BK11}.  Further, in~\cite{BM1}, Ben McReynolds and Bou-Rabee explored some other functions, which they call \emph{Girth functions}, which concern intersections and their relationship to the ball of radius $n$ in the word metric. Among other results, they show that such the Girth function controls nilpotency in the class of linear groups. Along this line of work, McReynolds and Bou-Rabee studied the integrability of divisibility functions in~\cite{BM2}, where an analog of Bertrand's postulate for integers was proved in the setting of groups.

The study of these growths is still in its infancy and connections to other properties are still
being explored. For example, in~\cite{BM1} it was proved that in linear groups
the residual finiteness growth function can be used to give a new characterization
of virtually nilpotent groups (via Gromov's theorem).
A geometric motivation is discussed in the final remarks of ~\cite{BM3}.
In Section~\ref{sec:applications}}

\section{Definitions and statements of main results \label{sec:statements}}

Let $\CG$ be a class of subgroups of a group $\Gamma$.
We define the $\CG$-\emph{intersection growth function} of $\Gamma $ by letting $\intfunc^\CG_\Gamma(n)$ be the index of the intersection of all $\CG$-subgroups of $\Gamma $ with index at most $n $.   In symbols,
$$
\intfunc^\CG_\Gamma(n) := [\Gamma : \Lambda^\CG_\Gamma(n)]
,\ \ \  \text { where } \ \Lambda^\CG_\Gamma(n) := \bigcap_{[\Gamma: \Delta] \leq n, \Delta \in \CG} \Delta.
$$
Here, $\CG $ will always be either the class of all subgroups, the class $\lhd$ of normal subgroups, the class $\max$ of maximal subgroups or the class $\max \lhd $ of maximal normal subgroups of $\Gamma $, i.e.\ those subgroups that are maximal among normal subgroups.  The corresponding intersection growth functions will then be written $i_{\Gamma}  (n)$, $i_{\Gamma} ^\lhd (n) $, $i_{\Gamma} ^ {\max} (n) $
and $i_{\Gamma} ^ {\max\lhd} (n) $.

Our main theorem is a precise asymptotic calculation of the maximal normal intersection growth and the maximal intersection growth of free groups.

\begin {named}{Theorem~\ref{maximal}}
Let $\FF ^ k $ be the rank $k $ free group. Then we have $$ i_{\FF ^ k}^ {\max \lhd} (n) \asymptotic  e^{{ n ^ { k -\frac{2}{3}} } }\  \text{ and } \  i_{\FF ^ k}^ {\max} (n) \asymptotic
i_{\FF ^ k} (n) \asymptotic e^{{n^n}} .$$
\end {named}


\medskip
Here we write $f(n) \asymptotic g(n)$ if there exist suitable constants $A,B,C,D>0$ such that
$f(n) \le Ag(Bn)$ and $g(n) \le Cf(Dn)$ for all positive integers $n$.  In the proof, we use the classification theorem for finite simple groups to show that the maximal normal intersection growth of $\FF^k$ is controlled by subgroups with quotient isomorphic to
$\PSL_2 (p)$, whereas the maximal intersection growth comes from alternating groups. Note that Theorem~\ref {maximal} clearly gives a lower bound for the normal intersection growth of $\FF ^ k $. 

We can also calculate the intersection growth of special linear groups.

\begin{named}{Theorem \ref{SL_3}}
For the special linear groups $\SL_k(\Z)$, where $k\geq 3$, we have 
$$  i_{\SL_k(\Z)}^\lhd(n) \asymptotic  i_{\SL_k(\Z)}^{\max \lhd}(n) \asymptotic
 e^{n^{1/(k^2-1)}}, \ \text{ but } \ 
 i_{\SL_k(\Z)}^{\max}(n) \asymptotic  e^{n^{1/(k-1)}}.$$
\end{named}

The intersection growth of a group is comparable to that of its finite index subgroups (see Lemma \ref{finiteindex}).  So as $\SL_2(\Z)$ is virtually free, the asymptotics in the $k=2$ case are wildly different from those in Theorem \ref{SL_3}.

\subsection{Acknowledgements}
The first author was partially supported by NSF Postdoctoral Fellowship DMS-0902991.
The second author was partially supported by NSF RTG grant DMS-0602191 and the Ventotene 2013 conference.
The third author was partially supported by NSF grants DMS-0900932 and DMS-1303117.
The fourth author gratefully acknowledges the Fondation Math\'ematique
Jacques Hadamard
(FMJH - ANR - Investissement d'Avenir) for the support received during the development of this work.
Finally, we are grateful to Benson Farb, Alex Lubotzky, Ben McReynolds, Peter Neumann
and Christopher Voll for helpful mathematical conversations and references.

\section {Notation and basic properties of intersection growth} \label {basics}

We introduce here some asymptotic notation and study the relationship between the intersection growth of a group and of its subgroups.  We say
$$
a(n) \asymptoticless b(n) \ \ \ \text { if } \ \ \ \exists \,C,D >0 \text { such that } \forall n, \ a(n) \leq C \,b(Dn).
$$
Similarly, $a(n) \asymptotic b (n) $ means that both $a(n) \asymptoticless b (n) $ and $b(n) \asymptoticless a (n) $.  Sometimes we will have sharper asymptotic control, in which case we write
$$
a(n) \congruentleq b(n) \ \ \ \text { if } \ \ \ \limsup_{n\to\infty} \frac {a(n) } {b(n)} \leq 1.
$$
Then, as before, $a(n) \congruent b (n) $ means that both $a(n) \congruentleq b (n) $ and $b(n) \congruentgeq a (n) $.

\begin{lem}\label{finiteindex}
Let k be a natural number and $\Delta$ an index k subgroup of $\Gamma$. Then
\begin {itemize}
\item $i_\Gamma (n) \leq k \cdot \intfunc_\Delta (n) \leq  \intfunc_\Gamma(k n)$, so we have $$\intfunc_\Delta(n) \asymptotic \intfunc_\Gamma(n).$$
\item $i_\Gamma ^\lhd(n) \leq k \cdot \intfunc^{\lhd}_\Delta (n) \leq \intfunc^{\lhd}_\Gamma((kn)^{k}),$ so we have
$$i_\Gamma ^\lhd (n) \asymptoticless \intfunc^{\lhd}_\Delta (n) \asymptoticless \intfunc^{\lhd}_\Gamma(n^{k}).$$
\end {itemize}
\end{lem}
\begin{proof}
For the first part, note that an index $n $ subgroup of $\Delta $ is an index $kn$ subgroup of $\Gamma $. This shows that
$
\Lambda_\Delta(n) \geq \Lambda_\Gamma( kn ).
$
Moreover, if $H \leq \Gamma $ then $[\Gamma: H] \geq [\Delta:\Delta\cap H]$.  From this we obtain
that
$$
\Lambda_\Gamma (n) \ =
 \bigcap_{H\leq \Gamma, \, [\Gamma:H]\leq  n} H \ \ \ge \ \bigcap_{H \leq \Gamma, \,[\Delta:\Delta \cap H]\leq n} \Delta \cap H
\ge \ \Lambda_\Delta(n).
$$
The first item of the lemma then follows since
$$
\Lambda_\Gamma(k n) \le \Lambda_\Delta (n) \le \Lambda_\Gamma (n).
$$
The first inequality of the second item follows exactly as above, since intersecting an index $n $ normal subgroup of $\Gamma $ with $\Delta $ gives an index at most $n $ normal subgroup of $\Gamma $.  The second inequality, however, is different since normal subgroups of $\Delta $ are not necessarily normal in $\Gamma $.

So, suppose that $\Delta $ has coset representatives $g_1, \ldots, g_k$.
For any normal subgroup $N$ of $\Delta$, we have
$$
\bigcap_{i=1}^k g_i N g_i^{-1}
$$
is normal in $\Gamma$ and has index at most $([\Gamma:N])^k$. Hence,
$
\Lambda^{\lhd}_\Delta(n) \geq \Lambda^{\lhd}_\Gamma( (kn)^k ).
$
Further,
$$
[\Gamma : \Lambda^{\lhd}_\Gamma( (kn)^k )] \geq [\Gamma: \Lambda^{\lhd}_\Delta(n)] = k[\Delta: \Lambda^{\lhd}_\Delta(n)].
$$
So $\intfunc_\Gamma((kn)^k) \geq k \intfunc_\Delta(n)$.
\end{proof}

We will soon see in Proposition~\ref{Zcase}
and Theorems~\ref{SL_3} and
~\ref{maximal} that
$$
 i_{\BZ} ^\lhd(n) \congruent e^n, \ \  i_{\FF ^ 2} ^\lhd(n) \asymptoticge  e ^ {n ^ {\frac 43}}, \ \ i_{\SL_3 (\BZ)} ^\lhd(n) \asymptotic e^{n^\frac 18}.
$$
Since there are inclusions $\Z \leq \FF ^ 2 \leq \SL_3(\Z)$,
for infinite index subgroups there is no general relationship between containment and intersection growth.

Here is an example showing the necessity of the power $n^k $ in the second half of Lemma~\ref{finiteindex}.

\begin{exa}\label {qex}
Let $Q < \GL_2(\BZ)$ be the order $8 $ subgroup generated by
$$
\begin {pmatrix} 0 & - 1\\1 & 0 \end {pmatrix} \text { and }\begin {pmatrix} 0 & 1\\1 & 0 \end {pmatrix}.
$$
One can compute that
$G=\BZ ^ 2 \rtimes Q$
has
$ i^ {\lhd}_G(n) \asymptotic e^{\sqrt n}$.  Since $ i ^{\lhd}_{\BZ ^2} (n) \asymptotic e ^ n$, this shows that the \emph {normal} intersection growth may indeed increase upon passing to a finite index subgroup, as is allowed by Lemma~\ref{finiteindex}.
The difference comes from the fact that $Q$ acts irreducibly on $(\BZ/p\BZ)^2$ for $p \geq 2$.\footnote{We do not know if the exponent $k$ in part b) of the lemma is the best possible -- this example can be extended to show that exponent must grow with the index.}

In fact, one can also prove that $\log i_\Gamma(n) \congruentleq n$ while $\log i_{\Z^ 2} (n) \congruent 2n$, which shows the necessity of a factor like $k$ in $i_\Gamma(kn)$ in the first part of the lemma.  The point is that the subgroups $i\Z \times \BZ$ and $\BZ \times i\BZ$, $i\leq n$, of $\Z^2$ that one intersects to realize $i_{\BZ^2}(n)$ cannot themselves be realized as intersections $\Delta \cap \BZ ^ 2 $ of subgroups $\Delta <\Gamma $ with $[\Gamma : \Delta]=i$.  This contrasts with the case of the product $\BZ^2 \times Q$, wherein any subgroup $\Delta < \BZ^2$ is the intersection with $\BZ^2 $ of $\Delta \times Q$, a subgroup of the product with the same index as $\Delta$ had in $ \BZ^2$.
\end{exa}

Intersection growth behaves well with respect to direct products:

\begin{pro}
\label{pro:products}
\begin{enumerate}
\item if $\Gamma= \Delta_1 \times \Delta_2$ then $i_\Gamma^\bullet(n) = i_{\Delta_1}^\bullet(n) . i_{\Delta_2}^\bullet(n)$ where $\bullet$ is one of
$\max,\lhd,\max\lhd$ or $\bullet$ or no symbol at all.
\item if $\Gamma= \prod_{s=1}^\infty \Delta_s$ then $i_\Gamma^\bullet(n) = \prod_s i_{\Delta_s}^\bullet(n)$ provided that $i_{\Delta_s}^\bullet(n)=1$ for almost all $s$.
\end{enumerate}
\end{pro}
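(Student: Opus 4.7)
The plan is to reduce both parts to the identity $\Lambda^\bullet_\Gamma(n) = \Lambda^\bullet_{\Delta_1}(n) \times \Lambda^\bullet_{\Delta_2}(n)$ inside $\Gamma = \Delta_1 \times \Delta_2$; taking indices then yields part (1), and part (2) follows by decomposing $\Gamma = \prod_{s \in S} \Delta_s \times \Gamma'$ with $S := \{s : \intfunc^\bullet_{\Delta_s}(n) > 1\}$ finite by hypothesis and $\Gamma' := \prod_{s \notin S} \Delta_s$, then iterating.

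For the inclusion $\subseteq$ in part (1), the key observation is that whenever $H_1 \leq \Delta_1$ is a $\bullet$-subgroup of index $\leq n$, then $H_1 \times \Delta_2$ is again a $\bullet$-subgroup of $\Gamma$ of the same index. This is immediate for $\bullet \in \{\emptyset, \lhd\}$; for $\bullet \in \{\max, \max\lhd\}$ it follows from the lattice correspondence between subgroups of $\Gamma$ containing $H_1 \times \Delta_2$ and subgroups of $\Delta_1$ containing $H_1$ (plus the induced quotient identification in the normal case). Intersecting over all such $H_1$, and symmetrically over $\bullet$-subgroups of $\Delta_2$, produces the inclusion.

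The reverse inclusion amounts to showing that every $\bullet$-subgroup $H \leq \Gamma$ of index $\leq n$ contains $\Lambda^\bullet_{\Delta_1}(n) \times \Lambda^\bullet_{\Delta_2}(n)$. For $\bullet \in \{\emptyset, \lhd\}$ this is routine: the slice $N_i := \{x \in \Delta_i : (x, e) \in H\}$ inherits the $\bullet$-property, the embedding $\Delta_i/N_i \hookrightarrow \Gamma/H$ gives $[\Delta_i : N_i] \leq n$, so $\Lambda^\bullet_{\Delta_i}(n) \subseteq N_i$, and hence $\Lambda^\bullet_{\Delta_1}(n) \times \Lambda^\bullet_{\Delta_2}(n) \subseteq N_1 \times N_2 \subseteq H$. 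I expect the main obstacle to be the maximal flavors $\bullet \in \{\max, \max\lhd\}$, where a case split is needed: if $H$ contains one of the factors $\Delta_i \times \{e\}$ or $\{e\} \times \Delta_i$ then $H$ is of product form and one reduces directly; otherwise both projections $\pi_i(H) = \Delta_i$ are surjective, and a Goursat-lemma argument (after checking that $N_i$ is still normal in $\Delta_i$) realizes $H/(N_1 \times N_2)$ as the graph of an isomorphism $\Delta_1/N_1 \xrightarrow{\sim} \Delta_2/N_2 =: G$, with maximality of $H$ forcing $G$ simple of order $\leq n$. For $\bullet = \max\lhd$, this makes $N_i$ maximal normal in $\Delta_i$ of index $\leq n$, so $\Lambda^{\max\lhd}_{\Delta_i}(n) \subseteq N_i$; for $\bullet = \max$, the triviality of the Frattini subgroup of the simple group $G$ identifies $N_i$ with the intersection of the preimages in $\Delta_i$ of the maximal subgroups of $G$, each of which is maximal in $\Delta_i$ of index $\leq |G| \leq n$, so $\Lambda^{\max}_{\Delta_i}(n) \subseteq N_i$.

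For part (2), the finite decomposition and iterated application of part (1) give $\intfunc^\bullet_\Gamma(n) = \prod_{s \in S} \intfunc^\bullet_{\Delta_s}(n) \cdot \intfunc^\bullet_{\Gamma'}(n)$, so it suffices to show $\intfunc^\bullet_{\Gamma'}(n) = 1$. Applying the factor-slicing argument to each $\Delta_s$ ($s \notin S$) regarded as a direct factor of $\Gamma'$ shows $\Delta_s \cap H = \Delta_s$ (from $\intfunc^\bullet_{\Delta_s}(n) = 1$), and the Goursat rigidity excludes the diagonal possibility since no $\Delta_s$ admits a simple quotient of order $\leq n$ under the hypothesis, forcing $H = \Gamma'$.
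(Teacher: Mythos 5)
Your proof is correct, and it takes a genuinely different route from the paper's. The paper proves the key identity $\Lambda_{\Gamma_1 \times \Gamma_2}(n) = \Lambda_{\Gamma_1}(n)\Lambda_{\Gamma_2}(n)$ by a short sandwich argument: the upper bound $\Lambda_\Gamma(n) \le \Gamma_1\Lambda_{\Gamma_2}(n) \cap \Lambda_{\Gamma_1}(n)\Gamma_2$ comes from lifting subgroups $K \le \Gamma_i$ to $\Gamma_{3-i}K \le \Gamma$, and the lower bound from slicing any $H \le \Gamma$ of index $\le n$ to $H \cap \Gamma_i$, which again has index $\le n$. The paper spells this out only for the class of all subgroups and asserts ``the other cases being similar.'' Your proof instead works entirely on the lower-bound side (every $\bullet$-subgroup $H$ of $\Gamma$ of index $\le n$ contains $\Lambda^\bullet_{\Delta_1}(n)\times\Lambda^\bullet_{\Delta_2}(n)$), and handles the maximal flavors by a case split plus Goursat's lemma and the triviality of the Frattini subgroup of a simple group.

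The difference is not cosmetic. The paper's slice argument transfers cleanly to $\lhd$ (if $H\lhd\Gamma$ then $H\cap\Gamma_2\lhd\Gamma_2$), but for $\bullet=\max$ the step $\bigcap_H(H\cap\Gamma_2)\ge\Lambda^{\max}_{\Gamma_2}(n)$ is not ``similar'': if $H$ is a maximal subgroup of $\Gamma$, the slice $H\cap\Gamma_2$ need not be maximal in $\Gamma_2$ (take $\Gamma_1=\Gamma_2=A_5$ and $H$ the diagonal, so $H\cap\Gamma_2=\{e\}$). Exactly the Goursat/Frattini content you supply is what is needed to close that gap, so your treatment of $\max$ and $\max\lhd$ is more honest than the paper's remark suggests. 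Conversely, the paper's sandwich is more economical for the two easy flavors and produces the identity $\Lambda_\Gamma(n)=\Lambda_{\Gamma_1}(n)\times\Lambda_{\Gamma_2}(n)$ in one symmetric stroke.

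One small caution on part (2), which affects both your write-up and the paper's: after the reduction to showing $\intfunc^\bullet_{\Gamma'}(n)=1$ for $\Gamma'=\prod_{s\notin S}\Delta_s$, the factor-slicing gives $\Delta_s\subseteq H$ for all $s$, hence $H\supseteq\bigoplus_s\Delta_s$, but this alone does not yield $H=\Gamma'$ for an \emph{abstract} unrestricted direct product (the quotient $\prod_s\Delta_s/\bigoplus_s\Delta_s$ is enormous). This is invisible in the profinite setting where the proposition is actually applied (open subgroups of a product must contain a cofinite subproduct), and the paper's ``immediate corollary of part (1)'' is equally terse. Your sketch of part (2) is therefore at the same level of rigor as the paper's, but it would be worth flagging that the conclusion $H=\Gamma'$ uses either the restricted-product or profinite interpretation of $\prod_s\Delta_s$.
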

\begin{proof}
Part (2) is an immediate corollary of part (1). We show (1) for intersection growth,
the other cases being similar.
Observe that for any
$H$ has index $\le n$ in $\Gamma_1 \times \Gamma_2$, we have
\[
[\Gamma_i:\Gamma_i \cap H]\le [\Gamma_1 \times \Gamma_2 : H]\le n.
\]
If $K$ has index $\le n$ in $\Gamma_2$, it is clear that
$\Gamma_1K$ has the same index in $\Gamma_1 \times \Gamma_2$.
Keeping this in mind, we compute the following string of containments:
\begin{align*}
\Gamma_1 \Lambda_{\Gamma_2}(n)=\bigcap_{\substack{K \le \Gamma_2 \\ [\Gamma_2:K]\le n}} \Gamma_1 K
\ge \Lambda_{\Gamma_1 \times \Gamma_2}(n)=
\\
\bigcap_{\substack{H \le \Gamma_1 \times \Gamma_2 \\ [\Gamma_1 \times \Gamma_2:H]\le n}} H
\ge
\bigcap_{\substack{H \le \Gamma_1 \times \Gamma_2 \\ [\Gamma_1 \times \Gamma_2:H]\le n}} H\cap \Gamma_2 \ge
\bigcap_{\substack{S \le \Gamma_2 \\ [\Gamma_2:S]\le n}}S=\Lambda_{\Gamma_2}(n).
\end{align*}
Similarly one has that $\Lambda_{\Gamma_1}(n)\Gamma_2 \ge \Lambda_{\Gamma_1 \times \Gamma_2}(n)
\ge \Lambda_{\Gamma_1}(n)$. Thus one has that
\[
\Lambda_{\Gamma_1}(n)\Lambda_{\Gamma_2}(n) = \Lambda_{\Gamma_1}(n)\Gamma_2
\cap \Gamma_1\Lambda_{\Gamma_2}(n) \ge \Lambda_{\Gamma_1 \times \Gamma_2}(n)
\ge \Lambda_{\Gamma_1}(n)\Lambda_{\Gamma_2}(n)
\]
and the result follows.
\end{proof}

For quotients, the correspondence theorem always yields a lower bound.
\begin{observ}
\label{thm:quotients}
Let $N$ be a normal subgroup of $\Gamma$. Then
$$i ^\bullet_{\Gamma/N}(n) \le i^\bullet_{\Gamma}(n).$$
\end{observ}

For extensions, one still has upper bounds.

\begin {pro}
\label{pro:quotients}
Suppose that $1 \longrightarrow N \longrightarrow \Gamma \longrightarrow Q \longrightarrow 1$ is exact.  Then 
$$i^\bullet_{\Gamma}(n) \leq i^\bullet_{N} (n) \cdot i ^\bullet_{Q}(n),$$
where $\bullet $ is $\lhd$ or no symbol at all.  The same bound holds for $\max,\max\lhd$ when the extension is split.
\end {pro}
\begin {proof}
This follows from the fact that if $\Delta \subset \Gamma $ is a subgroup and $\Delta_N, \Delta_Q $ are its intersection with $N $ and projection to $Q$, then we have
$$\max([\Delta:\Delta_N], [Q:\Delta_Q]) \ \leq \ [\Gamma: \Delta] \ \leq \ [\Delta:\Delta_N]\cdot [Q:\Delta_Q].$$
The split assumption is used in the latter two cases to show that if $\Delta$ is maximal or maximal normal in $\Gamma $, then so are $\Delta_N < N$ and $ \Delta_Q <Q.$
\end {proof}


\section {The profinite perspective}
\label {sec:profinite}

If $\Gamma $ is a finitely generated group, its \emph{profinite completion} $\widehat \Gamma$ is the inverse limit of the system of finite quotients of $\Gamma $, taken in the category of topological groups.  In fact, intersection growth is really a profinite invariant, in that it only depend on the profinite completion of the group $\Gamma$.
\begin{lem}
\label{pro-finite}
Let $\Gamma$ be a finitely generated group  and let $\widehat{\Gamma}$ be its profinite completion.
Then
$
i^\bullet_{\Gamma}(n) = i^\bullet_{\widehat{\Gamma}}(n)
$
for every positive integer $n$, where $\bullet$ is one of
$\max,\lhd,\max\lhd$  or no symbol at all.
\end{lem}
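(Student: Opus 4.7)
The plan rests on the standard correspondence between finite-index subgroups of $\Gamma$ and open subgroups of $\widehat{\Gamma}$. Let $\pi\co\Gamma\to\widehat{\Gamma}$ denote the canonical homomorphism; its image is dense.

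First, I would recall the basic facts about this correspondence. If $\Delta\le\Gamma$ has finite index, then $\overline{\pi(\Delta)}$ is an open subgroup of $\widehat{\Gamma}$ with $\pi^{-1}(\overline{\pi(\Delta)})=\Delta$ and $[\widehat{\Gamma}:\overline{\pi(\Delta)}]=[\Gamma:\Delta]$. Conversely, if $\Lambda\le\widehat{\Gamma}$ is open, then $\pi^{-1}(\Lambda)$ has finite index in $\Gamma$ with $[\Gamma:\pi^{-1}(\Lambda)]=[\widehat{\Gamma}:\Lambda]$, since density of $\pi(\Gamma)$ in the finite discrete quotient $\widehat{\Gamma}/\Lambda$ makes the induced map $\Gamma\to\widehat{\Gamma}/\Lambda$ surjective. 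These constructions are mutually inverse and index-preserving. Next I would verify that the bijection respects each flavor $\bullet$: normality is preserved because closures and preimages of normal subgroups are normal; maximality among finite-index subgroups is preserved because any open subgroup strictly between $\overline{\pi(\Delta)}$ and $\widehat{\Gamma}$ would, by density, intersect $\pi(\Gamma)$ in a subgroup whose preimage lies strictly between $\Delta$ and $\Gamma$.

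Invoking the theorem of Nikolov--Segal that every finite-index subgroup of a topologically finitely generated profinite group is open, I may then replace ``open'' by ``finite-index'' on the $\widehat{\Gamma}$ side, obtaining a bijection between $\bullet$-subgroups of index at most $n$ on the two sides. Since the collections are finite, $\pi^{-1}$ commutes with the intersection, giving $\pi^{-1}(\Lambda^{\bullet}_{\widehat{\Gamma}}(n))=\Lambda^{\bullet}_{\Gamma}(n)$, and the equality of indices then follows from the index-preserving part above. The main non-routine ingredient is the Nikolov--Segal theorem, needed to rule out non-open finite-index subgroups of $\widehat{\Gamma}$ that could otherwise shrink the intersection (most delicately when $\bullet$ is the empty symbol; the maximal case can alternatively be reduced to the normal one by passing to the normal core); the rest of the argument is formal manipulation of the profinite topology.
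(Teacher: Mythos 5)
Your proof is correct and follows essentially the same route as the paper: set up the index-preserving, flavor-preserving bijection between finite-index subgroups of $\Gamma$ and open (equivalently closed) finite-index subgroups of $\widehat{\Gamma}$, note that it respects intersections, and invoke Nikolov--Segal to identify open subgroups with all finite-index subgroups on the profinite side. The paper states this more tersely, but your proposal is just a fleshed-out version of the same argument.
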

\begin{proof}
This follows from the observation that there is a bijection between finite index subgroups of $\Gamma$ and finite index closed subgroup of $\widehat{\Gamma}$ which preserves intersections. In the definition of $ i^\bullet_{\widehat{\Gamma}}$ we can either take all finite index subgroups or all closed finite index subgroups, as by a result of Nikolov and Segal
\cite{nikolov-segal-1}
all finite index subgroups in a topologically finitely generated profinite groups are closed, so there is no difference in our case.
\end{proof}

The profinite completion of $\Gamma$ can also be considered as a metric completion. Namely, a \it profinite metric \rm on $\Gamma $ is defined by fixing a decreasing function $\rho: [0,\infty) \to [0,\infty)$ with $\lim_{x\to\infty}\rho(x)=0$ and letting
$$d_\rho(g,h) := \rho \left(\min \{ [\Gamma : \Delta ] : \Delta \lhd \Gamma, g h^{-1} \notin \Delta \}\right). $$
When $\Gamma $ is residually finite, $d_\rho $ is a metric on $\Gamma$, while in general it defines a metric on the quotient of $\Gamma $ by the intersection of its finite index subgroups.  In any case, the metric completion of this space is homeomorphic to $\hat\Gamma $.

Now let $X$ be a metric space and consider a subset $S \subset X$.  Recall that the \it $d$-dimensional Hausdorff content \rm of $S$ is defined by
$$
C^d_H(S) := \inf \left\{ \sum_i r_i^d : \text{ there is a cover of $S$ by balls of radii } r_i> 0 \right\},
$$
and the \emph{Hausdorff dimension} of $X$ is 
$\dim_H(X) :=\inf \{ d \geq 0 : C_H^d(X) = 0 \}.$

We then have the following Proposition.

\begin{pro} If $\Gamma $ is a group, $\dim_H (\hat\Gamma) = -\liminf_{n \to \infty} \frac{\log  i^ {\lhd}_\Gamma(n)}{\log\rho (n)}$.
\end{pro}

For instance, if $\rho(n) = e ^ {- n}$ then the Hausdorff dimension is the coefficient $c$ in exponential intersection growth $e^{c n}$.  If $\rho (n) =\frac 1n$, then $\dim_H (\hat\Gamma)$ is the degree of polynomial intersection growth.

There is actually no difference in Hausdorff dimension between the profinite completion $\hat\Gamma $ and the group $\Gamma $, considered with the (pseudo)-metric $d_\rho $.  A priori, the Hausdorff dimension of $\Gamma $ could be less, but the second half of the proof below works just as well for $\Gamma $ as for $\hat\Gamma $.

For simplicity, we stated this proposition for normal intersection growth.  However, after changing the definition of $d_\rho$ by considering only subgroups $\Delta $ in a class $\bullet$, an analogous result follows for $\bullet $-intersection growth.

\begin{proof}
The main point here is that $B_{d_\rho} \left(e,\rho (n)\right)= \overline {\Lambda^\lhd_\Gamma (n) }, $  where the set on the left is the  ball of radius $\rho (n) $ around the origin in $\hat\Gamma $.  To prove that 
$$\dim_H (\hat\Gamma) \geq -\liminf_{n \to \infty}\frac{\log  i ^ {\lhd}_\Gamma(n)}{\log\rho (n)}, $$
we just note that if $d $ is greater than the right-hand side then there are arbitrarily large $n $ such that $$d \geq - \frac {\log i ^ {\lhd}_\Gamma (n)} {\log\rho (n)}. $$  However this implies that ${i ^ {\lhd}_{\Gamma} (n)} \leq \rho (n) ^{-d }$, so using cosets of $\overline {\Lambda^\lhd_{\Gamma} (n)} $ we can cover $\hat\Gamma $ with at most $\rho (n) ^ {-d} $ balls of radius $\rho (n) $.  Therefore, we have that the $d $-dimensional Hausdorff content of $\hat\Gamma $ is at most $1 $.  This proves the first half of the proposition.

We must now show that $$\dim_H (\hat\Gamma) \leq -\liminf_{n \to \infty}\frac{\log  i ^ {\lhd}_\Gamma(n)}{\log\rho (n)}. $$
That is, if $d$ is less than the right-hand side then we need to show that the $d$-Hausdorff content of $\hat\Gamma $ is greater than zero.  Suppose that $\{B_i\} $ are balls with radii $\rho (n_i) $ that cover $\hat\Gamma $.  Choosing the radii to be small, we may assume that all $n_i $ are large enough that $d < -\frac {\log i ^ {\lhd}_{\Gamma} (n)} {\log\rho (n)} $.   If $\mu $ is the Haar (probability) measure on $\hat\Gamma $, then 
\begin {align*}
1 \leq \sum_i\mu (B_i)
=\sum_i \frac 1 {i ^ {\lhd}_{\Gamma} (n_i)}
\leq \sum_i \rho (n_i) ^d,
\end {align*}
which shows that the $d $-dimensional Hausdorff content of $\hat\Gamma $ is at least $1 $.
\end{proof}

\section{Intersection growth of polycyclic groups and $\SL_k(\BZ)$
\label{sec:intgrowth-Z-SL}}

Of course, the investigation of any new growth function should begin with the following example.

\begin{pro}
\label{Zcase}
For the group $\BZ ^ k $, we have
$
i_{\BZ ^ k} (n) = 
\lcm \{1,\ldots, n\} ^ k  ,$
and $
i^{\max}_{\BZ ^ k} (n)= 
\lcm\{p | p< n, \mbox{$p$-prime}\} ^ k.
$  
\end{pro}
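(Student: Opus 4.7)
The plan is to identify $\Lambda_{\BZ^k}(n)$ explicitly with $\lcm(1,\ldots,n) \BZ^k$, identify $\Lambda^{\max}_{\BZ^k}(n)$ with $\lcm\{p : p < n \text{ prime}\} \BZ^k$, and then invoke Chebyshev's estimates on the prime counting function to convert the resulting index formulas into the claimed $e^{kn}$ asymptotics.

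For the ordinary intersection growth, first I would observe that if $H \leq \BZ^k$ has index $m$, then by Lagrange's theorem applied to the finite abelian group $\BZ^k/H$, we have $m\BZ^k \subseteq H$. Thus any index at most $n$ subgroup contains $m \BZ^k$ for some $m \leq n$, and so
\[
\Lambda_{\BZ^k}(n) \supseteq \bigcap_{m \leq n} m \BZ^k = \lcm(1,\ldots,n)\, \BZ^k.
\]
For the reverse inclusion, I would note that for every $m \leq n$ and every standard coordinate projection $\pi_j \co \BZ^k \thrw \BZ$, the preimage $\pi_j^{-1}(m\BZ)$ is an index $m \leq n$ subgroup of $\BZ^k$. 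Intersecting these as $m$ ranges over $1,\ldots,n$ forces the $j$th coordinate of anything in $\Lambda_{\BZ^k}(n)$ to be divisible by $\lcm(1,\ldots,n)$; running over all $j$ gives $\Lambda_{\BZ^k}(n) \subseteq \lcm(1,\ldots,n) \BZ^k$. Taking indices yields the exact formula $i_{\BZ^k}(n) = \lcm(1,\ldots,n)^k$.

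For the maximal intersection growth, I would use the fact that a maximal subgroup $M$ of $\BZ^k$ has $\BZ^k/M$ a simple abelian group, hence isomorphic to $\BZ/p\BZ$ for a prime $p$, so $M$ is the kernel of some surjection $\BZ^k \thrw \BZ/p\BZ$. The intersection of all such kernels for a fixed $p$ is exactly $p\BZ^k$, since an element of $\BZ^k$ lies in every such kernel iff its reduction mod $p$ is annihilated by every linear functional on $(\BZ/p\BZ)^k$. Running over primes $p < n$ yields
\[
\Lambda^{\max}_{\BZ^k}(n) = \bigcap_{p < n \text{ prime}} p\BZ^k = \lcm\{p < n \text{ prime}\}\, \BZ^k,
\]
giving the stated formula for $i^{\max}_{\BZ^k}(n)$.

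To finish, I would invoke Chebyshev's theorem, which asserts that both $\log\lcm(1,\ldots,n) = \psi(n)$ and $\sum_{p \leq n} \log p = \theta(n)$ satisfy $\theta(n), \psi(n) \asymptotic n$ (indeed $\psi(n) \sim \theta(n) \sim n$ by the prime number theorem, but only Chebyshev's elementary bounds are needed for the $\asymptotic$ statement). Raising to the $k$th power then gives $i_{\BZ^k}(n), i^{\max}_{\BZ^k}(n) \asymptotic e^{kn}$. There is no serious obstacle here; the only subtlety is making sure that the identification of $\Lambda^{\max}_{\BZ^k}(n)$ uses the fact that index $p$ subgroups of $\BZ^k$ are automatically maximal, so that the class of maximal subgroups of index at most $n$ coincides with the kernels of surjections onto $\BZ/p\BZ$ for primes $p \leq n$.
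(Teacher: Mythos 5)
Your proof is correct, and the core identification --- that $\Lambda_{\BZ^k}(n) = \lcm(1,\ldots,n)\BZ^k$ and $\Lambda^{\max}_{\BZ^k}(n) = \bigl(\prod_{p\leq n} p\bigr)\BZ^k$, followed by Chebyshev/PNT estimates on $\log\lcm(1,\ldots,n)$ and $\sum_{p\leq n}\log p$ --- matches the paper's. The one structural difference is that the paper first invokes Proposition~\ref{pro:products} (multiplicativity of intersection growth over direct products) to reduce to $k=1$, where the computation is a one-line observation that $\BZ$ has a unique subgroup $l\BZ$ of each index $l$, maximal exactly when $l$ is prime. You instead argue directly in $\BZ^k$: Lagrange's theorem on $\BZ^k/H$ gives the containment $\Lambda_{\BZ^k}(n)\supseteq \lcm(1,\ldots,n)\BZ^k$, coordinate projections give the reverse, and the maximal case goes through classifying maximal subgroups as kernels of surjections $\BZ^k\thrw\BZ/p\BZ$. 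Your route is more self-contained in that it bypasses the product formula; the paper's is shorter once that proposition is in hand. Either way the content is the same and there is no gap.
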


\begin{proof}
Using Proposition~\ref{pro:products}, it is enough to verify the statement for $k=1$.
Since $\BZ$ has a unique (automatically normal) subgroup of index $l$ for each $l$,
$$
\Lambda_{\BZ} (n) = \bigcap_{l \leq n} l\BZ = \lcm \{1,\ldots, n\}\BZ.
$$
Now Corollary \ref {lcm} yields that $i_{\BZ}(n) = \lcm (1,\ldots, n) \asymptotic e^n$.

The situation with maximal subgroups is similar -- the subgroup $l\BZ$ is maximal in $\BZ$ only when $l$ is prime, so
$$
\Lambda^{\max}_{\BZ} (n) = \bigcap_{p \leq  n} p\BZ = \left(\prod_{p \leq n} p\right) \BZ,
$$
and $i^{\max}_{\BZ}(n) = \left(\prod_{p \leq n} p\right) \asymptotic e^n$.

One can also give a profinite version of this argument, as $i_{\BZ}^\bullet=i_{\widehat{\BZ}}^\bullet$ by Lemma \ref{pro-finite}.
The chinese reminder theorem gives that $\widehat{\BZ} = \prod_p \BZ_p$, so one only needs to compute the functions $i^{\bullet}_{\BZ_p}$ and apply Proposition \ref{pro:products}.\end{proof}

Combined with Proposition \ref {pro:products} and the 
prime number theorem (see Corollary \ref {lcm}), Proposition 5.1 gives

\begin {cor}\label {fga}
If $\Gamma $ is a finitely generated abelian group, then $i_\Gamma ^\bullet(n) \asymptotic e^n$, where $\bullet $ is $\max$, $\max \lhd$, $\lhd$ or no symbol.  Moreover, if $\Gamma $ has rank $k $, then
$$\log i_\Gamma^{\bullet}(n)  \congruent  kn.$$
\end{cor}
As in section \ref {basics}, $f \congruentleq g$ means that the $\limsup_{n\to\infty} f(n) /g(n)\leq 1 $.  Most of our calculations of $\log i_\Gamma(n)$ only work up to multiplicative error, but in the beginning of this section some finer calculations are possible.

A \emph {polycyclic group} $\Gamma $ is a group that admits a subnormal series $$\Gamma = \Gamma_1 \rhd \Gamma_2 \rhd \cdots \rhd \Gamma_k = \{e\}$$ in which all the quotients $\Gamma_i / \Gamma_{i+1}$ are cyclic.  Examples include finitely generated nilpotent groups and extensions of such groups by finitely generated abelian groups.  The number of infinite factors $\Gamma_i / \Gamma_{i+1}$ in such a subnormal series is called the \emph {Hirsch length} of $\Gamma$.

\begin {pro} \label {polycyclic}
Any infinite polycyclic group $\Gamma $ has $ i_\Gamma(n) \asymptotic e^n$.  Specifically, we have that $\frac nc \congruentleq \log i_\Gamma(n) \congruentleq kn,$ where  $c$ is the smallest index of a subgroup of $\Gamma$ with infinite abelianization and $k$ is the Hirsch length of $\Gamma$.
\end {pro}

\begin {proof}
Suppose that $\Delta $ is an index $c$ subgroup of $\Gamma$ with infinite abelianization.  Then by Lemma \ref {finiteindex}, $i_\Gamma (n) \leq c \cdot \intfunc_\Delta (n) \leq  \intfunc_\Gamma(c n)$. Observation
\ref{thm:quotients} now yields the lower bound, since
$\log i_\Delta(n) \geq \log i_{\Delta^{ab}}(n) \congruentgeq n$.   For the upper bound, let $N < \Delta $ be a subgroup with $\Delta / N$ infinite cyclic.  Then as $N$ has Hirsch length $k-1$, we have by induction, Proposition \ref {pro:quotients} and Corollary \ref {fga} that $$\log i_\Gamma(n) \congruentleq \log i_\Delta(n) \congruentleq n + (k-1)n=kn.\qedhere$$
\end {proof}

Proposition \ref {polycyclic} certainly gives upper bounds for the normal, maximal normal and maximal intersection growth of polycyclic groups.  However, we remind the reader that as in Example \ref {qex} these upper bounds may not be sharp.  In fact, the same example illustrates how the lower bound for $i_{\Gamma} (n) $ may be affected by the index of a subgroup with infinite abelianization.


\vspace{2mm}

Here is a first calculation of intersection growth in a non-polycyclic group. 

\begin{thm} \label{SL_3}
For the special linear groups $\SL_k (\Z)$, where $k\geq 3$, we have 
$$  i_{\SL_k(\Z)}^\lhd(n) \asymptotic i_{\SL_k(\Z)}^{\max \lhd}(n) \asymptotic
 e ^{n^{1/(k^2-1)}}, \ \ \text{ but } \ \ 
i_{\SL_k(\Z)}^{\max}(n) \asymptotic  e ^{n^{1/(k-1)}}.$$
\end{thm}

We believe this result extends to other split higher rank Chevalley groups, where the numbers $k^2-1$  and $k-1$
should be replaced with the dimension of the group and the dimension of the smallest projective variety on which the group acts faithfully.
We added notes to the parts of our proof that would have to be modified to obtain such a generalization.

\begin{proof}
One of the main ingredients in the proof is the congruence subgroup property. One way to state this is that the map
$$
\pi : \widehat{\SL_k(\Z)} \to \SL_k(\widehat{\Z})
$$
is an isomorphism.%
\footnote{For some split higher rank Chevalley groups the map is not an isomorphism, but its kernel is finite and central (see \cite{BMS67}). 
This does not significantly affect the following estimates.}
Here $\widehat{\SL_k(\Z)}$ denotes the profinite completion of the group $\SL_k(\Z)$ and $\widehat{\Z}$ is the profinite completion of the ring $\Z$.
Since $\widehat{\Z} = \prod_p \Z_p$, we then have a product decomposition
$$
\widehat{\SL_k(\Z)} \simeq \prod_p \SL_k(\Z_p).
$$
So, Proposition~\ref{pro:products} reduces the computation of $i^{\bullet}_{\SL_k(\Z)}$ to estimates of $i^{\bullet}_{\SL_k(\Z_p)}$.

Let $H$ be a normal subgroup of finite index in $\SL_k(\Z_p)$. Then there exists minimal integer $s$ called the \emph{level} of $H$ such that
$H$ contains the congruence subgroup  $\SL_k^s(\Z_p) = \ker ( \SL_k(\Z_p) \to \SL_k(\Z_p/p^s\Z_p))$.

\begin{lem}
\label{lm:levelsubgroups}
If $H$ is a normal subgroup of $\SL_k(\Z_p)$ of level $s$, 
then the image of $H$ in $\SL_k(\Z_p/p^s\Z_p)$ is central.
\end{lem}
\begin{proof}
We prove that a noncentral normal subgroup $H $ of $\SL_k(\Z_p/p^s\Z_p)$ must contain $\SL_k^ {s - 1}(\Z_p/p^s\Z_p)$.  First, assume that $H \cap \SL_k^1(\Z_p/p^s\Z_p)$ contains a noncentral element $g$.  We can then write $$g=Z+p^iA, \ \ 1\leq i \leq s-1,$$ where $Z$ is central and congruent to the identity mod $p$, while $A$ is noncentral modulo $p$.  The binomial theorem\footnote{If $p\neq 2,$ then $(Z+p^iA)^p = Z^p + p^{i+1}A'$, where $A'\equiv AZ^{p-1}$ is noncentral modulo $p$, so we can increase $i $ by taking powers.  If $p=2$ and $i=1$, we have $A'\equiv AZ+A^2$,which may be central modulo $p$.  However, using the irreducibility of $\SL_k(\Z_2) $ acting on $\ssl_k(\mathbb{F}_2)$ modulo its center, one may replace $g\in H$ by another element $Z+2^iA\in H$ where this is not the case.}
 then shows that after replacing $g$ by an iterated $p^{th}$ power we may assume that $i =s-1$.  The only invariant $\SL_k(\Z_p)$-submodule of $\ssl_k(\mathbb{F}_p)$ is its center, so as $A$ above is noncentral and is naturally an element of $\ssl_k(\mathbb{F}_p)$, it follows that by multiplying together sufficiently many conjugates of $g$ we can generate any element of the form $1+p^{s-1}A'$, so $H$ contains $\SL_k^ {s - 1}(\Z_p/p^s\Z_p)$.

If $H \cap \SL_k^1(\Z_p/p^s\Z_p)$ is central, every element of $H$ is central modulo $p^{s-1}$.  As $H $ is noncentral, it then has an element of the form $Z+p^{s-1}A$, where $A$ is noncentral modulo $p$, and the argument finishes as above.
\end{proof}

\comment{

begin{lem}
\label{lm:levelsubgroups2}
If $H \leq \SL_k(\Z_p)$ has level $s$, then $[\SL_k(\Z_p):H] \geq (p/2)^{s(k-1)}.$
\end{lem}
\begin {proof}
Let $H $ be a subgroup of $\SL_k(\Z/p^s\Z)$ and suppose that the intersection $I=H \cap \SL_k^ {s - 1}(\Z/p^s\Z)$ is not all of $ \SL_k^ {s - 1}(\Z/p^s\Z) \cong \ssl_k(F_p)$.  Then $I $ is identified with an index 
\end {proof}}

This lemma says that, up to a small perturbation coming from the centers, the congruence subgroups $\SL_k^s(\Z_p)$ are the only normal subgroups of $\SL_k(\Z_p)$. As the size of the center of $\SL_k(\Z_p/p^s\Z_p)$ is always less than $k$, this implies that $i^{\lhd}_{\SL_k(\Z_p)}(n)$ is approximately linear.  More precisely,
\begin{pro}
We always have
$$
\frac{1}{p^{k^2-1}}\leq \frac{i^{\lhd}_{\SL_k(\Z_p)}(n)}{n} \leq k
$$
and
$$
i^{\max \lhd}_{\SL_k(\Z_p)}(n)= \left\{
\begin{array}{cl}
1 & \mbox{ if  }n <  |\PSL_k(\F_p)| \\
|\PSL_k(\F_p)| & \mbox{ if  } n \geq  |\PSL_k(\F_p)|,
\end{array}
\right.
$$
$$
i^{\max}_{\SL_k(\Z_p)}(n)= \left\{
\begin{array}{cl}
1 & \mbox{ if  }n <  |\F_pP^{n-1}| \\
|\PSL_k(\F_p)| & \mbox{ if  } n \geq  |\F_pP^{n-1}|.
\end{array}
\right.
$$
where $\F_pP^{n-1}$ is the projective space of dimension $n-1$ over $\F_p$.
\end{pro}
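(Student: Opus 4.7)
The proof reduces all three statements to structural facts about the finite quotients $\SL_k(\Z_p/p^s\Z_p)$ via Lemma~\ref{lm:levelsubgroups}. In each case, the plan is to pick the level $s$ matching the bound $n$ and identify precisely which normal (resp.\ maximal normal, resp.\ maximal) subgroups of $\SL_k(\Z_p)$ of index at most $n$ actually exist.

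For the bound on $i^{\lhd}_{\SL_k(\Z_p)}$, I would let $s$ be maximal with $|\SL_k(\Z_p/p^s\Z_p)| \leq n$. Then $\SL_k^s(\Z_p)$ itself lies in the intersection collection, so $i^{\lhd}_{\SL_k(\Z_p)}(n) \geq |\SL_k(\Z_p/p^s\Z_p)| > n/p^{k^2-1}$, using that $|\SL_k(\Z_p/p^{s+1}\Z_p)|$ exceeds $n$ and is $p^{k^2-1}$ times $|\SL_k(\Z_p/p^s\Z_p)|$. Conversely, by Lemma~\ref{lm:levelsubgroups} any normal $H$ of level $t$ has image in $\SL_k(\Z_p/p^t\Z_p)$ contained in the center, so $[\SL_k(\Z_p):H] \geq |\PSL_k(\Z_p/p^t\Z_p)| \geq |\SL_k(\Z_p/p^t\Z_p)|/k$; this forces every $H$ of index $\leq n$ to have level at most $s$, hence to contain $\SL_k^s(\Z_p)$, giving $i^{\lhd}_{\SL_k(\Z_p)}(n) \leq kn$.

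For the maximal normal case, perfectness of $\SL_k(\Z_p)$ (which holds for $k \geq 3$) forces every maximal normal subgroup to have nonabelian simple quotient. Combining the congruence subgroup property with Lemma~\ref{lm:levelsubgroups}, this quotient must be $\PSL_k(\Z_p/p^s\Z_p)$ for some minimal $s$; but this surjects nontrivially onto $\PSL_k(\F_p)$ when $s \geq 2$, so simplicity forces $s = 1$. Hence $\SL_k(\Z_p)$ has a unique maximal normal subgroup, of index $|\PSL_k(\F_p)|$, and the stated dichotomy is immediate. For the full maximal case, maximal subgroups of $\SL_k(\Z_p)$ correspond to primitive actions of its finite quotients; the minimal primitive action of $\SL_k(\F_p)$ is on $\F_p P^{k-1}$, with stabilizer index $|\F_p P^{k-1}|$ realized by parabolic point and hyperplane stabilizers. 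Once $n$ reaches this threshold, only these parabolic pullbacks are included, and their intersection is the preimage in $\SL_k(\Z_p)$ of $Z(\SL_k(\F_p))$ (the kernel of the projective action), of index $|\PSL_k(\F_p)|$.

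The main obstacle is verifying the maximal case: one must rule out maximal subgroups arising from primitive actions of deeper quotients $\SL_k(\Z_p/p^s\Z_p)$ with $s \geq 2$ whose index might a priori drop below $|\F_p P^{k-1}|$. The key ingredient is that any such ``deep'' maximal subgroup $M$ satisfies $M \cdot \SL_k^1(\Z_p) = \SL_k(\Z_p)$, so $(M \cap \SL_k^1)\cdot \SL_k^2 / \SL_k^2$ is a proper $\SL_k(\F_p)$-submodule of the adjoint module $\mathfrak{sl}_k(\F_p)$; (near-)irreducibility of this module forces $[\SL_k(\Z_p):M] \geq p^{k^2-2}$, which exceeds $|\F_p P^{k-1}|$ for $k \geq 3$ and so excludes such $M$ from the collection in the relevant range of $n$.
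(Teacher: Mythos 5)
Your proof follows the same route as the paper's, which compresses each of the three estimates into one sentence: the $i^{\lhd}$ bounds come from Lemma~\ref{lm:levelsubgroups} (any level-$s$ normal subgroup has central image in $\SL_k(\Z_p/p^s\Z_p)$, giving the factor $k$) together with the ratio $|\SL_k(\Z_p/p^{s+1}\Z_p)|/|\SL_k(\Z_p/p^s\Z_p)| \le p^{k^2-1}$; the $i^{\max\lhd}$ formula restates that $\SL_k(\Z_p)$ has a unique maximal normal subgroup, of index $|\PSL_k(\F_p)|$; and the $i^{\max}$ threshold comes from the minimal faithful action of $\PSL_k(\F_p)$ being on $\F_pP^{k-1}$ (the paper has a typo, writing $n-1$ for $k-1$). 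Your expansion of all three is correct, and your treatment of the maximal case actually supplies a step the paper silently skips: one must rule out maximal subgroups $M$ that do \emph{not} contain $\SL_k^1(\Z_p)$, and your Frattini/adjoint-module argument --- $(M\cap\SL_k^1)\SL_k^2/\SL_k^2$ is a proper $\SL_k(\F_p)$-submodule of $\mathfrak{sl}_k(\F_p)$, forcing $[\SL_k(\Z_p):M]\geq p^{k^2-2} > |\F_pP^{k-1}|$ --- is exactly what is needed. Two small remarks: (1) the congruence subgroup property plays no role at this local stage; the level filtration of normal subgroups of $\SL_k(\Z_p)$ is Lemma~\ref{lm:levelsubgroups} itself, and CSP is only used later to pass from $\SL_k(\Z)$ to $\prod_p\SL_k(\Z_p)$. (2) Your phrase ``only these parabolic pullbacks are included'' is literally true only at $n=|\F_pP^{k-1}|$; what carries the formula to larger $n$ is that every maximal subgroup containing $\SL_k^1$ also contains the preimage of $Z(\SL_k(\F_p))$, so the intersection does not drop below $|\PSL_k(\F_p)|$ until the ``deep'' maximals (your index-$p^{k^2-2}$ ones) begin to contribute --- and then the paper's exact-value claim is slightly off, though this is harmless for the asymptotics in Theorem~\ref{SL_3}.
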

\begin{proof}
The upper bound $\frac{i^{\lhd}_{\SL_k(\Z_p)}(n)}{n}$ follows from Lemma~\ref{lm:levelsubgroups} and the observation above that the size of the center of
$\SL_k(\Z_p/p^s\Z_p)$ is less than $k$.  The lower bound comes from the ratio of the size of $\SL_k(\Z_p/p^s\Z_p)$ and $\SL_k(\Z_p/p^{s+1}\Z_p)$.  The formula for the maximal normal intersection growth of  $\SL_k(\Z_p)$ is restating the fact that this group has a unique maximal normal subgroup,
while the last formula uses that the smallest (with respect to the number of points) nontrivial action of
$\PSL_k(\F_p)$ is on the projective space $\F_pP^{n-1}$.
\end{proof}


Now we are ready to estimate $i^{\lhd}_{\SL_k(\Z)}$:
$$
i^{\lhd}_{\SL_k(\Z)}(n) = \prod_p i^{\lhd}_{\SL_k(\Z_p)}(n) = \prod_{p\leq 2\sqrt[k^2-1]{n} } i^{\lhd}_{\SL_k(\Z_p)}(n)
$$
The second equality follows from Lemma \ref{lm:levelsubgroups}, which implies that $\SL_k(\Z_p)$ has no normal subgroups of index less than $ {|\PSL_k(\F_p)|} \geq {(p/2)^{k^2-1}}$.   Then
$$
i^{\lhd}_{\SL_k(\Z)}(n) \ \leq  \prod_{p\leq 2\sqrt[k^2-1]{n}} kn \ \asymptoticless \  e^{(k^2-1)(2\sqrt[k^2-1]{n} )},
$$
where we estimate the products over primes using the prime number theorem.
This inequality implies
$i^{\lhd}_{\SL_k(\Z)}(n) \asymptoticless e ^{\sqrt[k^2-1]n}$.

Similarly, we can obtain a lower bound for $i_{\SL_k(\Z)}^{\max \lhd}(n)$:
\begin{align*}
i^{\max \lhd} _{\SL_k(\Z)}(n) \ &= \ \prod_p i^{\max \lhd}_{\SL_k(\Z_p)}(n) \geq \prod_{p \leq \sqrt[k^2-1]{n} }  i^{\max \lhd}_{\SL_k(\Z_p)}(n) \\
&\geq \prod_{p\leq \sqrt[k^2-1]{n}} (p/2)^{k^2-1} \  \asymptoticge \  e^{\sqrt[k^2-1]{n}},
\end{align*}
where the justification of the inequalities is the same as above, except that for the last one we appeal to Lemma \ref {primepowers}.

Therefore we have inequalities
$$
e ^{\sqrt[k^2-1]n}
\succeq
i^{\lhd}_{\SL_k(\Z)}
\succeq
i^{\max \lhd}_{\SL_k(\Z)}
\succeq
e ^{\sqrt[k^2-1]n}.
$$
The computation for  $i^{\max}_{\SL_k}$ is essentially the same but the products are only over $p \leq \sqrt[k-1]{n}$ which is, loosely speaking, almost equivalent to
$|\F_pP^{k-1}| \leq n$.
\end{proof}


\section {Intersection growth of nonabelian free groups}

By Observation \ref{thm:quotients}, the free group of rank $k$ has the fastest-growing intersection growth functions among groups generated by $k$ elements.  Here are their asymptotics:

\begin{thm}
\label{maximal}
Let $\FF ^ k $ be the rank $k $ free group. Then
$$
i_{\FF ^ k}^ {\max\lhd} (n) \asymptotic  e ^ {\left( {n ^ { k -2/3}} \right )}, 
\quad
 i_{\FF ^ k}^ {\max} (n) \asymptotic  i_{\FF ^ k} (n)  \asymptotic e ^ {\left( {n ^ n} \right )}.
$$
\end{thm}

The proof for $i_{\FF ^ k}^ {\max\lhd} (n)$ uses the classification theorem for finite simple groups: we calculate separately the index of the intersection of subgroups with quotient a fixed finite simple group and then combine these estimates to give the asymptotics above.  In fact, we will show that the growth rate of
$i_{\FF ^ k}^{\max\lhd}(n) $ is dictated by subgroups with quotient $\PSL_2 (p) $.  The contributions of other families of finite simple groups are comparatively negligible.

The lower bound for $i_{\FF ^ k}^ {\max} (n) $ and $  i_{\FF ^ k} (n) $ comes from the alternating group $A(n)$.  Since the (maximal) index $n $ subgroups of $A(n)$ intersect trivially, one automatically gets a factorial lower bound for intersection growth from any surjection $\FF^k \longrightarrow A(n)$.   We will see that multiplying the estimates that one gets from all possible surjections gives the lower bound of $e ^ { {n ^ n}} $.  As this requires some of the same machinery as does the calculation of $i_{\FF ^ k}^ {\max\lhd} (n)$, we will finish this argument at the end of the section.

However, the proof of the upper bound of $e ^ {\left( {n ^ n} \right )}$ is completely general:

\begin {pro}\label {universalbounds}
If $\Gamma$ is a finitely generated group, then $i_{\Gamma} (n) \asymptoticless e^{n^n}$.
\end {pro}
\begin {proof}
 If $H \leq \Gamma$ is a subgroup with index $i$, then $H$ contains the kernel of the map $\Gamma \longrightarrow S_i$ determined by the action of $\Gamma $ on the cosets of $H $.  Thus,
$$\bigcap_{\substack {H \leq \Gamma, [\Gamma : H] = i} }H \ \ \supseteq \ \ \bigcap_{f: \Gamma \to S_i} \ker f.$$
Each such kernel has index at most $i!$, and if $\Gamma $ is $k $-generated, there are at most $(i!)^k$ homomorphisms from $\Gamma $ to $S^i$. So, this implies that
$$i_{\Gamma}(n) \leq \prod_{i=1}^n (i!)^{(i!)^k} \asymptotic  \prod_{i=1}^n (i^i)^{(i^i)} \asymptotic  \prod_{i=1}^n (i^{i^i}) \asymptotic \prod_{i=1}^n (e^{i^i}) \asymptotic e^{n^n}.\qedhere$$
\end {proof}

We now start on the calculation of the asymptotics of $i_{\FF ^ k}^ {\max\lhd} (n)$.  The key is to consider first the index of the intersection of subgroups with a specific quotient and then to analyze how these estimates combine.  As abelian quotients are easy to handle, we focus mostly on the non-abelian case.

Fix a finite simple group $S $ and let $i_{\FF ^ k} (S)$ be the index of the subgroup
$$
\Lambda_{\FF ^ k} (S) =\bigcap_{\substack{ \Delta \lhd \FF ^ k \\ \FF ^ k / \Delta \cong S } } \Delta \ \ \leq \ \FF^k.
$$


\begin{pro}
\label {whatisd}\label {localfactors}
If $S $ is a non-abelian finite simple group, then $\FF ^ k / \Lambda_{\FF ^ k} (S) \cong  S  ^ {d(k,s)} $,
where $d(k,S)$ is the number of $\Delta \lhd \FF ^ k $ with $\FF ^ k/\Delta \cong S $. \end{pro}

\begin{proof}
If $\Delta_1,\ldots,\Delta_i \trianglelefteq \FF ^ k  $ are distinct normal subgroups with $\FF ^ k  /\Delta_i \cong S $, 
\begin {equation*}\label {productequation}
\begin{array}{cccc}
\varphi: & \FF ^ k / \left(\Delta_1 \cap \ldots \cap \Delta_i\right ) & \hookrightarrow  & \FF ^ k  / \Delta_1 \, \times \, \cdots \, \times \, \FF ^ k / \Delta_i, \\
& g\left(\Delta_1 \cap \ldots \cap \Delta_i\right)  & \mapsto & (g\Delta_1, \ldots, g\Delta_i).
\end{array}
\end {equation*}
is the required isomorphism.
\end{proof}

The advantage of Proposition~\ref{localfactors} is that $d (k, S) $ is easily computed: namely, observe that $d (k, S) $ measures exactly the number of generating $k $-tuples in $S $, modulo the action of the automorphism group $\automorphism (S) $.  In other words,

\begin{lem}\label {bounded}
If $S $ is a non-abelian finite simple group, then
$$
 d (k, S) = p(k,S) \frac{\displaystyle | S | ^ {k}}{\displaystyle |\Aut(S)|} \ \congruent \ \frac {| S | ^ {k- 1}} {|\Out (S) |} .
$$
where $p(k,S)$ is the probability that a $k$-tuple of elements in $S$ generates.
\end{lem}
\begin{proof}
For the first equality, note that under the action of $\Aut(S)$ on $S^k$ each generating tuple has orbit of size $ |\Aut(S)|$.  Since $S $ is non-abelian and simple, the conjugation action of $S $ is faithful, so $|\automorphism (S) | = | S | |\Out (S) | $.  Finally, Liebeck-Shalev~\cite{Shalev} and Kantor-Lubotzky~\cite{Lubotzky} have shown that for any fixed $k$ the probability $p(k,s)$ tends to $1$ as $|S| \to \infty$, which gives the asymptotic estimate.
\end{proof}

We mention that recently Menezes,  Quick and Roney-Dougal 
\cite[Theorem 1.3]{MQR13} have given an explicit lower bound for $d(k,S)$
and show when it can be attained.

\begin {cor}\label {fixedgroup}
There is some fixed $\epsilon >0 $ such that if $S $ is a finite non-abelian simple group, then we have the estimate
$$
 | S | ^ {\, \epsilon\cdot \frac {|S| ^ {k - 1}} {|\Out (S) |} } \leq i_{\FF ^ k}(S) \leq | S | ^ {\frac {|S| ^ {k - 1}} {|\Out (S) |} }.
$$
\end{cor}

\vspace{3mm}

To calculate $i_{\FF ^ k}^ {\max\lhd} (n)$, we now analyze the intersections of subgroups of $\FF ^ k $ with quotients lying in a given infinite family of finite simple groups.  Table~\ref {groups} gives the classification of infinite families; the list includes all finite simple groups other than the finitely many `sporadic' groups.


\begin {table}[htbp]
\newcolumntype {Q}{ >{$}c <{$}}
\begin {tabular}{|Q | Q | Q |}
\hline
\mbox{Family}   &  \mbox{Approximate Order} & \mbox {Approximate $|\Out | $} \\
\hline
 \BZ/p\BZ, \, p \mbox{ prime }   &  p & p - 1 \\
 A (m), \,m \geq 5    &  \frac{m!}{2} & 2, \text { unless } m = 6 \\
 \PSL_m (q), \,m \geq 2   &   \frac{1}{(m, q- 1)} q^{m ^ 2 - 1}&  (m,q - 1)\cdot s\\
 B_m (q), \, m \geq 2     &   \frac{1}{(2, q - 1)} q^{2m^2+m}  &  s\\
 C_m (q), \, m \geq 3     &   \frac{1}{(2, q - 1)} q^{2m^2+m}& s \\
 D_m (q), \, m \geq 4     &   \frac{1}{(4, q m - 1)}q^{2m^2 -m}& s \\
 ^ 2 A_m (q^ 2), \, m \geq 2     &  \frac {1} {(m + 1, q - 1)} q ^ {m^2 +2m +1 }& (m + 1,q + 1)\cdot s \\
 \  ^ 2 D_m (q^ 2), \, m \geq 4 \      &  \frac {1} {(4, q^ m + 1)} q ^ {2m^2 - m} & s \\
 E_6 (q)    &   \frac{1}{(3, q - 1)} q ^ {78} & s  \\
 E_7 (q)   &   \frac{1}{(2, q - 1)} q ^ {133} & s\\
 E_8 (q)   &   q ^ {248} & s\\
 F_4 (q)   &   q ^ {52}  & s\\
 G_2 (q)   &   q ^ {14} &  s\\
 ^2E_6 (q^ 2)    &  \frac{1}{(3, q + 1)} q ^ {78}  & s\\
 ^3 D_4 (q ^ 3)   &  q ^ {28} & s \\
 ^2 B_2 (2 ^ {2 j + 1})    &  q^5,  \mbox{where } q = 2 ^ {2 j + 1} & 2 j + 1  \\
 ^2 G_2 (3 ^ {2 j + 1})    &  q^7 , \mbox{where } q = 3 ^ {2 j + 1} & 2 j + 1 \\
 ^2 F_4 (2 ^ {2 j + 1})    &   \  q^{26}, \mbox{where } q = 2 ^ {2 j + 1} \  & 2 j + 1  \\
\hline
\end {tabular}
\caption{\footnotesize Infinite families of finite simple groups, their sizes and the sizes of their outer automorphism groups.  We assume  $m, j\in \BN$   and that  $ q = p ^ s$  is  a prime power.  The approximations given are true up to a universal multiplicative error.}
\label{groups}
\end {table}
Let $i_{\FF ^ k}^{\CG}(n)$ be the index of the intersection of all normal subgroups of $\FF ^ k $ with index at most $n $ and quotient lying in a family $\CG $ of finite simple groups.  It will be convenient to split the rows in Table \ref {groups} into \it single parameter \rm families: if a row has two indices, we fix $m $ while varying $q $.  Examples of single parameter $\CG $ include $A $, $^ 2 E_6 $, $\PSL_2 $, $\PSL_3 $, etc.

\begin {pro}  There is a product formula
\label {families}
$$i_{\FF_k} ^ {\max\lhd} (n) \ \ 
\asymptotic \prod_{\substack { \text {single parameter} \\ \text {families } \CG }}
i_{\FF_k} ^ {\CG} (n) \ \ = \ \  \prod_{\CG } \, \prod_{\substack {S \in \CG \\ | S | \leq n}} i_{\FF  ^ k} (S).$$
\end {pro}

The multiplicative discrepancy is due to the absence of the sporadic groups in the product: since there are only finitely many of them they contribute at most a multiplicative constant to $i_{\FF ^ k} $.  Also, although the product is infinite, for each $n $ there are at most $Cn $ non-unit factors for some universal  $C $.

\begin{proof}[Proof of Proposition \ref {families}]
This follows inductively from the fact that if $\Delta_1,\Delta_2$ are normal subgroups of $\FF  ^ k $ such that $G/\Delta_1 $ and $G/\Delta_2 $ have no nontrivial isomorphic quotients, then
$
\FF  ^ k/(\Delta_1\cap\Delta_2)\cong \FF  ^ k/\Delta_1 \times \FF  ^ k/\Delta_2. \qedhere
$
\end{proof}

\begin{pro}
\label{PSL}
For a single parameter family $\CG $ of finite simple groups,
\begin {enumerate}
\item $\log i_{\FF ^ k} ^{\CG} (n)  \asymptotic n $, if $\CG$ is the family of cyclic groups.
\item $\log i_{\FF ^ k} ^{\CG} (n)  \asymptotic n^{k-1} $, if $\CG=^2B_2, ^2G_2,$ or $^2F_4$.
\item $\log i_{\FF ^ k} ^{\CG} (n)  \asymptoticless n^{k-1}\log(n) $, if $\CG =A.$ 
\item $\log i_{\FF ^ k} ^{\CG} (n)  \asymptotic n ^ {(k - 1) +\frac{1}{d}},$ if $\CG $ is one of the remaining families of Lie type and $d $ is the dimension of the corresponding Lie group, which appears in Table \ref {groups} as the exponent of $q$.  More specifically, there is some universal $C $ such that
$
\log i_{\FF ^ k} ^{\CG} (n) \leq C \cdot d^ {k +\frac 1d} \cdot n ^ {(k - 1) +\frac{1}{d}}.
$
\end {enumerate}
\end {pro}

As the dimension $d$ is uniquely minimized when $\CG = \PSL_2$, this implies that the family $\PSL_2 $ has the fastest intersection growth.  The point of Theorem \ref{maximal} is that this is faster than the growth of all other families combined.

\begin {proof}
The cyclic case is essentially Corollary \ref {fga}. For all the others, we will combine Corollary \ref {fixedgroup} with the product formula in Proposition \ref {families}.  

When $\CG=\,^2B_2,$ we have that
\begin{align*}
\log i^{\CG}_{\FF ^ k}(n) \ \ & = \ \ \sum_{q\leq n} \ \ \log \left (|{^2B_2} (q) | ^ {\frac {|{^2B_2}(q) | ^ {k - 1}}{|\Out |}}  \right)\\
& = \sum_{2 ^ {10 j + 5} \leq n} \frac {(2 ^ {10 j + 5}) ^ {k - 1}} {2 j + 1} \log\left (2 ^ {10 j + 5}\right)\\
& \asymptotic  n ^ {k - 1},
\end {align*}
where the last step comes from the fact that a sum of exponentially increasing terms is proportional to the last term.  The Ree groups $^2G_2$ and $^2F_4$ admit similar computations.  For the alternating group,
\begin{align*}
\log i^{A}_{\FF ^ k}(n)  \  \asymptotic  \sum_{m!/2\leq n} \ \ {\frac {(m!/2) ^ {k - 1}}{2}}  \log ( m!/2)  \ \asymptoticless \ n ^ {k - 1} \log (n).
\end {align*}
Again the sum is proportional to its last term, but if $n= \frac {m!} 2 - 1 $, this last term is much smaller than $n ^ {k - 1} \log (n)$, and we stop with the upper bound.  The difference between this and the computation for Suzuki and Ree groups is that the gaps between successive factorials are large enough to make an asymptotic estimate that works for all $n $ unwieldy.

For the last case, we use the notation $\leq_c, \, \geq_c, \, =_c$ for comparisons that are true up to a \emph {universal} multiplicative error, in contrast to those in the statement of the proposition, where the error may depend on the family $\CG $.  

First, note that from Table \ref {groups}, for any of the groups $\CG (q) $ in $(4) $ we have $$\frac 1{d} q^{d} \leq_c\  |\CG(q)| \leq_c \ q ^ {d},$$ where the $\frac 1d$ is to account for the gcd's in $|\PSL_m(q) |$ and $|\, ^ 2A_m(q ^ 2) |$.  So for \emph {fixed} $\CG$ we have $|\CG(q)| \asymptotic q^d$. By Corollary \ref {bounded},
\begin{align*}
\log i_{\FF ^ k}(\CG (q)) & \le \log \left (|\CG (q) | ^ {|\CG(q) | ^ {k - 1}}  \right)\\ &= |\CG (q) | ^ {k - 1} \log |\CG (q) | \\
& =_c \ d \cdot q^ {(k - 1) d}  \log q.
\end {align*}

Next, we compute
\begin{align*}
\log i_{\FF^ k} ^ {\CG} (n) &=
\sum_{\substack {\text {prime powers } q \\ \text {with } |\CG (q) |\leq n}}
\log i_{\FF ^ k} (\CG (q)) \\
& =_c \ d \cdot \sum_{\substack {\text {prime powers } q \\ \text {with }  \frac 1{d\cdot C} q ^ {d} \leq n}}
 q ^ {(k - 1) d} \log q \\
& =_c  \  d \cdot  \left( (d\cdot Cn) ^ {\frac{1}{d}}\right) ^ {(k - 1) d + 1} \text { (by Lemma~\ref {primepowers})}\\
& =_c \  d^ {k +\frac 1d} \cdot n ^ {(k - 1) +\frac{1}{d}},
\end {align*}
This is the explicit upper bound promised.  If now $\CG$ is fixed, then $d$ is a constant, so this bound becomes $ \asymptoticless n ^ {(k - 1) +\frac{1}{d}}.$  

For the lower bound, it suffices to only consider $\CG(p)$ where $p $ is prime.  From Table \ref {groups}, we see that $|\Out(\CG (p) )| $ is bounded for prime $p $.  Therefore, 
\begin {align*}
\log i_{\FF ^ k}(\CG (p)) \asymptoticge \log \left (|\CG (p) | ^ {\frac { |\CG(p) | ^ {k - 1}}{|\Out(\CG (p) )|} }  \right) 
\asymptoticge  d \cdot p^ {(k - 1) d}  \log p.
\end {align*}
The lower bound then proceeds exactly as above, except that the sums are now over primes rather than prime powers.  But as this does not affect the output of Lemma~\ref {primepowers}, we see that $\log i_{\FF^ k} ^ {\CG} (n) \asymptoticge n ^ {(k - 1) +\frac{1}{d}}.$
\end{proof}

We are now ready to prove the main theorem.

\begin {named}{Theorem \ref {maximal}}
Let $\FF ^ k $ be the rank $k $ free group. Then
$$
i_{\FF ^ k}^ {\max\lhd} (n) \asymptotic  e ^ {\left( {n ^ { k -2/3}} \right )}, 
\quad
 i_{\FF ^ k}^ {\max} (n) \asymptotic  i_{\FF ^ k} (n)  \asymptotic e ^ {\left( {n ^ n} \right )}.
$$
\end {named}
\begin {proof}
For  $i_{\FF ^ k}^ {\max} (n)$ and $  i_{\FF ^ k} (n) $, it suffices by Proposition \ref {universalbounds} to prove that $i_{\FF ^ k}^ {\max} (n) \asymptoticge e ^ {\left( {n ^ n} \right )}.$  Since the kernel of any surjection $\FF^k \longrightarrow A (n)$ is the intersection of (maximal) index $n$ subgroups corresponding to the conjugates of $A(n-1) \subset A(n)$, we have by Corollary \ref {fixedgroup} that
$$i_{\FF ^ k}^ {\max} (n) \geq i_{\FF ^ k}(A(n))  \geq (n!/2 ) ^ {\, \epsilon\cdot \frac {(\frac {n!} 2) ^ {k - 1}} 2 } \asymptotic e ^ {n ^ n}.\\ $$

We next show that $i_{\FF ^ k}^ {\max\lhd} (n) \asymptotic  e ^ {\left( {n ^ { k -2/3}} \right )}$.  By Propositions ~\ref{families} and \ref{PSL}, 
$$\log i_{\FF ^ k}^ {\max\lhd} (n) \asymptotic \sum_{\CG} \log i_{\FF ^ k} ^\CG (n) \geq \log i_{\FF ^ k} ^{\PSL_2} (n) \asymptotic n ^ { k -\frac{2}{3}},$$ which gives the lower bound. For the upper bound, first observe that as the five families of types $(1)-(3)$ in Proposition \ref{PSL} have slower intersection growth than $\PSL_2$, removing them from the sum does not change its asymptotics.  Moreover, we only need to sum over type $(4)$ families $\CG$ such that $|\CG(2) | \leq n$, which implies that the dimension $d \leq C\log n$ for some universal $C $.  So by the explicit estimate in the last part of Proposition \ref{PSL}, 
\begin {align*}
\sum_{\CG \text{ with } d\leq C\log n} \log i_{\FF ^ k} ^\CG (n) & \asymptoticless \log i_{\FF ^ k} ^{\PSL_2} (n) \ + \sum_{\substack{\text {Type } (4) \  \CG\neq PSL_2 \\ \text{with } d\leq C\log n}}  C \cdot d^ {k +\frac 1d} \cdot n ^ {(k - 1) +\frac{1}{d}} \\
& \asymptoticless n ^ {k -\frac 23} + \log n \cdot (\log n)^ {k + 1} \cdot n ^ {(k - 1) +\frac 18}\\
&\asymptotic n ^ {k -\frac 23}.
\end {align*}
For the second inequality, a consultation of Table \ref {groups} shows that in the summation we actually have $8 \leq d \leq C \log n$.  Moreover, the number of terms in the sum is at most some constant multiple of $\log n$, which contributes the additional logarithm.
\end{proof}

\section{Identities in finite simple groups and residual finiteness growth
\label{sec:applications}}

In this section we discuss relations between the intersection growth function, the residual finiteness
growth function and identities in groups.

\begin {defn} An \emph{identity} or \emph{law} on $k$ letters in a group $\Gamma $ is a word
$w(x_1, \ldots , x_k)$ in the free group $\FF^k$ such that $w(g_1, \ldots, g_k) = 1$ for all
$g_1, \ldots, g_k \in \Gamma$.
\end {defn}

Much work has been devoted to study laws in finite groups. 
We only mention a few of them.
Oates and Powell \cite{OP64} and Kov{\'a}cs and Newman \cite{KN66} find
the smallest set of laws which generate every other law in a finite group. Hadad 
\cite{Had11}
finds estimates
on the shortest law in a finite simple group. Laws can also be used to 
characterize classes of groups defined by a set of laws (see the survey by 
Grunewald, Kunyavskii and Plotkin \cite{GKP12}
for the case of solvable groups). A question by Hanna Neumann 
\cite[p. 166]{Neumann}
asks if there is a law which is satisfied in an infinite number of non-isomorphic non-abelian finite simple groups. This question has been answered negatively by Jones \cite{Jon74}.
Our next result can be seen as an answer to the finite
version this question
and follows the same line of arguments used in 
\cite{BM1, KM11} where one finds a law for all finite groups of order at most a given size.
We apply our result on maximal intersection growth of $\FF^k$ to find identities in all
finite simple groups of at most a given size.

\begin{thm}
\label{thm:identities}
For every positive integer $n$, there exists a reduced word $w_n \in \FF^2$ of length
$|w_n|\asymptoticless  n ^ {\frac{4}{3}}$ which is an identity on $2$ letters for all finite
simple groups of size $\le n$.
\end{thm}
\begin{proof}
The proof is essentially an adaptation of the one
of Lemma 5 in \cite{KM11} and we mention it here
for the reader's convenience.

If $B_2(t)$ denotes the size of the ball of radius $t$ within $\FF^2$,
we let $t$ grow until the size of $B_k(t)$ is bigger than $[\FF^2: \Lambda^{max\lhd}_\Gamma(n)]$.
This counting argument shows that one can find
a non-trivial word $w_n \in \Lambda^{max\lhd}_\Gamma(n)$ of length
at most $\log [\FF^2: \Lambda^{max\lhd}_\Gamma(n)] = \log i_{\FF ^ 2}^ {max\lhd} (n) \asymptotic
n ^ {2 -\frac{2}{3}}$.

By construction, the word $w_n$ vanishes for any homomorphism of $\FF^k$ to a finite
simple group $\Gamma$ of size $\le n$ and it is thus an identity on $\Gamma$.
\end{proof}

\begin{rem}
It seems that the result above can be improved following the methods
in \cite{BM1} and \cite{KM11} to build identities as ``long commutators''.
In order to do so, one should use the classification theorem for finite simple groups
to verify that, in any finite simple group of size $\le n$, 
the order of an element is essentially not bigger that $\sqrt[3]{n}$. From this, one should use the methods in Theorem 1.2 in~\cite{BM1} or of
Corollary 11 in \cite{KM11} to show that there exists an identity of length $\asymptotic n$ on $k$
letters which holds for every finite group of size $\le n$. We also refer the reader
to the discussion in Remark 15 in~\cite{KM11}.
\end{rem}

Given a finitely generated, residually finite group $\Gamma=\langle S \rangle$ and $g \in \Gamma$, let
\[
k_\Gamma(g)=\min \{[\Gamma:N]: g \not \in N,  N \unlhd \Gamma\}.
\]
The \emph{residual finiteness growth function} (also called the 
\emph{depth function} in \cite{kharl-myasni-sapir-1}) is the function
\[
F_\Gamma^S(n) = \max_{g \in B^S_{\Gamma}(n)} k_\Gamma(g),
\]
where $B^S_{\Gamma}(n)$ denotes with ball of radius $n$ with respect to the generating set $S$.
The growth rate of $F_\Gamma^S(n)$ is independent of the generating set $S$
(see \cite{Bou}).  The function was first introduced by Bou-Rabee in \cite{Bou}
and subsequently it has been computed in several groups \cite{Bou, Bou2, BM1, BM3, BK11, KM11, kharl-myasni-sapir-1}.

As hinted at in the proof of Theorem \ref{thm:identities}, bounds for $i_{\Gamma}^\lhd(n)$ easily translate to lower bounds for $F^S_{\Gamma}$ using the word growth of $\Gamma$.  Namely,

\begin{observ}
If $\Gamma = \langle S \rangle$ is a finitely generated, residually finite group, $$|B^S_{\Gamma}(k)| > i_\Gamma^\lhd(n) \implies F_\Gamma^S(2k) \geq n.$$
\end{observ}
\begin{proof}
If $|B^S_{\Gamma}(k)| > i_\Gamma^\lhd(n)$ then there is some $w \in B^S_\Gamma(2k)$ that is in every subgroup of $\Gamma $ with index at most $n$.  
\end{proof}

This, and Theorem \ref{thm:identities} above, lead us to ask the following question:

\begin{quest}
Is it true that $i^{\lhd}_{\FF ^ k}(n) \asymptotic i^{max\lhd}_{\FF ^ k}(n)$? If it is indeed true, then we have the following
two consequences:
\begin{enumerate}
\item The statement of Theorem \ref{thm:identities} can be replaced with one which is true
for all finite groups of order $\le n$ (and not only simple ones).
\item The residual finiteness growth function satisfies $F_{\FF ^ 2}(n) \succeq n^{3/4}$ (which would
improve the current result $F_{\FF^2}(n) \succeq n^{2/3}$ proven in \cite{KM11}).
\end{enumerate}
\end{quest}

\appendix
\section {Number theoretic facts}

The prime number theorem states that $ \pi (n) \congruent n/\log(n),$ where $\pi (n)$ is the number of primes less than or equal to $n$.   We record here some consequences of the prime number theorem used in our asymptotic estimates.

\begin {fact}
If $p_n$ is the $n^{th}$ prime and $q_n$ is the $n^{th}$ prime power, then we have $$p_n \congruent q_n  \congruent n \log n.$$
\end {fact}

\begin {lem}
\label {primepowers}
Suppose $n, l \in \BN $.  Then
$$
\sum_{\substack { \text {prime powers} \\ q\leq n}} q ^ l \log (q) \,  \asymptotic  \, \sum_{\substack { \text {primes } \\ p\leq n}} p ^ l \log (p) \,  \asymptotic  \, n ^ {l + 1}.
$$
\end {lem}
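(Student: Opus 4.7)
The plan is to derive the statement from the prime number theorem (PNT), first estimating the prime sum against $n^{l+1}$ and then showing that the contribution of proper prime powers is of strictly lower order. Throughout I will use Chebyshev's function $\theta(n) := \sum_{p\leq n}\log p$, which by PNT satisfies $\theta(n) \sim n$. Since $l$ is fixed, the constants hidden in the $\asymptotic$-bounds are allowed to depend on $l$.

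First I would bound the prime sum $S(n) := \sum_{p\leq n} p^l \log p$. The upper bound is immediate: $S(n) \leq n^l \theta(n) \leq C_1\, n^{l+1}$. For the matching lower bound I would restrict to primes in $(n/2, n]$ and use PNT in the form $\theta(n)-\theta(n/2)\sim n/2$, which gives
$$ S(n) \;\geq\; (n/2)^l \bigl(\theta(n)-\theta(n/2)\bigr) \;\geq\; C_2\, n^{l+1} $$
for all sufficiently large $n$. Together these yield $S(n) \asymptotic n^{l+1}$.

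Next I would compare the prime sum to the prime power sum. Writing each prime power as $q = p^k$, their difference is
$$ \sum_{\substack{q = p^k \leq n \\ k\geq 2}} q^l \log q \;=\; \sum_{k=2}^{\lfloor \log_2 n\rfloor} k \sum_{p\leq n^{1/k}} p^{kl} \log p. $$
Bounding each inner sum crudely by $(n^{1/k})^{kl}\,\theta(n^{1/k}) \leq C_1\, n^{l}\, n^{1/k}$ and summing over $k\in\{2,\ldots,\lfloor \log_2 n\rfloor\}$ yields an overall bound of $O(n^{l+1/2}\log n)$. This is of strictly smaller order than $n^{l+1}$, so the prime power sum and the prime sum differ by a lower-order term and are both $\asymptotic n^{l+1}$, as required.

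The only mildly delicate point is the lower bound on $S(n)$, which requires a two-sided estimate on $\theta(n) - \theta(n/2)$; PNT gives this immediately, and one could alternatively invoke Bertrand-type Chebyshev estimates. Apart from that, the argument is routine partial summation plus the observation that proper prime powers are too sparse (only $O(\sqrt{n})$ squares, $O(n^{1/3})$ cubes, etc.) to affect the leading asymptotic.
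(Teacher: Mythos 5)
Your proof is correct, but it takes a genuinely different route from the paper's. The paper reindexes the sum using the PNT asymptotic $p_i \sim i\log i$ for the $i$-th prime (and asserts the same holds for the $i$-th prime power), then computes $\sum_{i \leq n/\log n} (i\log i)^l \log(i\log i) \asymptotic n^{l+1}$ directly; the reduction from prime powers to primes is handled by the heuristic that the $i$-th prime power is proportional to the $i$-th prime. You instead work with Chebyshev's $\theta$, getting the upper bound by pulling out $n^l$ and the lower bound by restricting to the dyadic window $(n/2, n]$, and you handle the prime power correction explicitly by stratifying over the exponent $k$ and bounding each layer by $C n^l n^{1/k}$. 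Your approach has the advantage that the reduction to primes is made fully quantitative (the correction is $O(n^{l+1/2}(\log n)^2)$ — note it should be $(\log n)^2$, not $\log n$, from $\sum_{k\le \log_2 n} k$, though this doesn't affect the conclusion), whereas the paper leaves that step somewhat informal; the paper's approach is a shorter computation once one accepts $p_i \sim i \log i$. Both are routine applications of PNT, and either serves the purpose of the lemma.
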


\begin {proof} The first asymptotic equality is just the fact that the $i^{th} $ prime is asymptotic to the $i^{th}$ prime power.  For the asymptotics, first note that
\begin{align*}
\sum_{\substack { \text {primes } \\ p\leq n}} p ^ l \log (p) \asymptotic \sum_{i\leq \frac {n} {\log (n)}} (i\log i)^ l \log (i\log i) 
\asymptotic \sum_{i\leq \frac {n} {\log (n)}} i^l (\log i)^ {l+1}
\end {align*}
So, using integration by parts this is
\begin {align*}
&\asymptotic\left (\frac {n} {\log n}\right) ^ {l + 1}\left (\log\left (\frac n {\log n}\right)\right) ^ {l+1} - \sum_{i\leq \frac {n} {\log (n)}} i^{l+1} \cdot \frac 1i \log(i)^{l}
\end {align*}
However, this latter sum grows slower than $\sum_{i\leq \frac {n} {\log (n)}} i^l (\log i)^ {l+1}$, so
\begin {align*}
\sum_{i\leq \frac {n} {\log (n)}} i^l (\log i)^ {l+1} &\asymptotic \left (\frac n {\log n}\right) ^ {l + 1} \log ^ {l+1} (n) \\
&=n ^ {l + 1}. \qedhere
\end {align*}
\end{proof}

\begin {cor}\label {lcm}
$\lim_{n\to \infty} \frac {\log \lcm( 1, \ldots, n) } n = 1 $, so $\lcm( 1, \ldots, n) \asymptotic e^n$.
\end {cor}


\noindent
Khalid Bou-Rabee \\
Department of Mathematics, University of Michigan \\
2074 East Hall, Ann Arbor, MI 48109-1043, USA \\
{\em E-mail}: \href{mailto:khalidb@umich.edu}{\tt khalidb@umich.edu}\\

\noindent
Ian Biringer \\
Department of Mathematics, Boston College \\
Carney Hall, Chestnut Hill, MA
02467-3806, USA  \\
{\em E-mail}: \href{mailto:ian.biringer@bc.edu}{\tt ian.biringer@bc.edu}\\

\noindent
Martin Kassabov \\
Department of Mathematics, Cornell University\\
Malott Hall, Ithaca, NY 14850, USA\\
{\em E-mail}: \href{mailto:kassabov@math.cornell.edu}{\tt kassabov@math.cornell.edu}\\

\noindent
Francesco Matucci \\
D\'epartement de Math\'ematiques, Facult\'e des Sciences d'Orsay, \\
Universit\'e Paris-Sud 11, B\^atiment 425, Orsay, France \\
{\em E-mail}: \href{mailto:francesco.matucci@math.u-psud.fr}{\tt francesco.matucci@math.u-psud.fr}

\end{document}